\newtheorem{thm}{Theorem}[section]
\newtheorem{lemma}[thm]{Lemma}
\newtheorem{prop}[thm]{Proposition}
\newtheorem{cor}[thm]{Corollary}
\DeclareMathOperator{\Ker}{Ker}
\DeclareMathOperator{\im}{Im}
\title{Visualizing Poincar\'{e} Duality}
\author{Lucien Clavier}
\begin{document}
\bibliographystyle{plain}
\maketitle
\tableofcontents

\pagebreak
\section{Introduction}
\label{introduction}
For any orientable topological manifold $ M $ of dimension $ n $,  $ H^p(M) $ and $ H^c_{n-p} (M) $ are always isomorphic; this fact is called Poincar\'{e} duality.

There are two ways to define an actual isomorphism between the two:

The first one, involving dual cellulation, is more visual, and appeared earlier in History. It is defined in section \ref{cellulation}.

The second, more modern since directly defined in the context of singular (co)-homology, has the drawback of being manipulated only in algebraic terms; as a result, the reason why it does work is not transparent, from a geometric point of view. See section \ref{cap}.

Another link between cohomology and homology, in dimension 2, is to associate to each 1-cocycle some ``level curve''. This process is explained in section \ref{levelcurves}.

\mbox{}

Our goal is both to connect these three notions, and to provide a natural visualization of them.
We will do that in the remaining sections.

\pagebreak
\section{Dual cellulation and duality}
\label{cellulation}
We are going to consider only the restricted case where $M$ is a compact, orientable, triangulated manifold, of dimension $n$ and finite triangulation $\Delta$. The (co)-homology we are considering is the simplicial one, with coefficients in $\mathbb{Z}$.

Orientability here means that there is a way of assigning a sign $\pm 1$ to each $n$-simplex of $\Delta$ in such a way that the sum of all such signed simplices is a $n$-cycle, see \cite{Bredon}, for instance. 
The homology class of such a $n$-cycle generates the $n^\text{th}$ homology group, and is called a fundamental class for $M$.

We are going to consider first the dual cellulation defined on each $n$-simplex. 

Let $T=[v_0, \ldots, v_n]$ be a $n$-simplex of $\Delta$, and let $S=[v_{i_0}, \ldots, v_{i_k}]$ be some $k$-face of $T$, where as usual $i_1< \ldots <i_k$.

The dual cell associated to $S$ in $T$ is the convex hull of all barycenters of subsets of vertices of $T$ that contain the vertices of $S$. It is part of the barycentric subdivision of $\Delta$, and therefore inherit a canonical orientation.
We will choose the convention that any $p$-face of the barycentric subdivision of $\Delta$ is written $[b_1, \ldots, b_p]$, where every $b_j$ is the barycenter of some set of vertices containing less elements that the set of vertices for $b_{j+1}$.
See \cite{Hatcher} (where the orientation convention is reversed from ours) for details about barycentric subdivision.

\mbox{}

For $n=3$, we get the following picture, where orientation is omitted:

\begin{figure}[H]
\centering
\includegraphics[width=80mm]{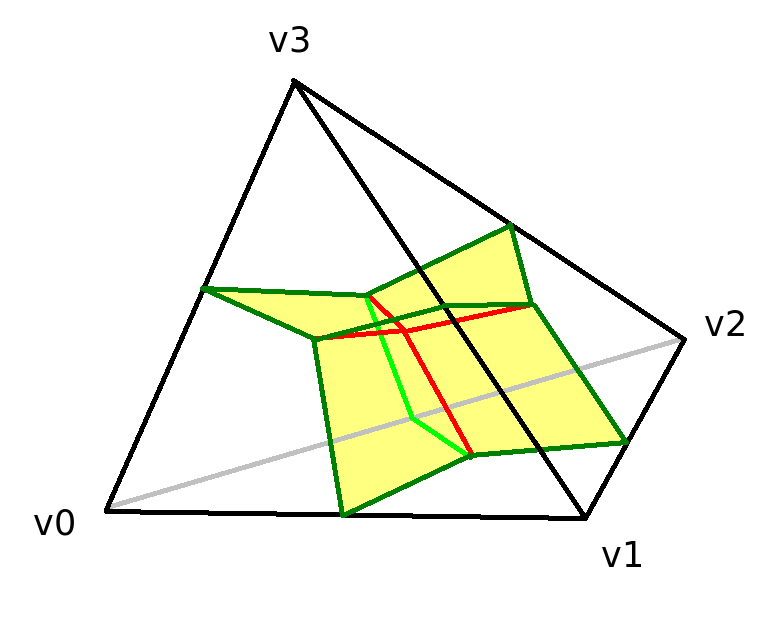}
\label{highdim22}
\end{figure}

The dual cell associated to a $2$-face of $T$ is the edge connecting the center of $T$ to the center of that face (in red in the picture); the dual cell associated to an edge of $T$ is the dimension-2 convex hull of the center of $T$, the center of this edge and the center of the two 2-faces containing this edge (in yellow in the picture). The dual cell of $T$ is its center, and the dual of this center is $T$ itself.

\mbox{}

Now, choose some $k$-cochain $\varphi$ of $M$. We associate to it the $(n-k)$-chain $D_1(\varphi)$ consisting of the sum of the dual of every $k$-simplex $S$, counted $\varphi(S)$ times.

More precisely and taking orientation into account, for every $n$-simplex $T$ and every of its $k$-face $S$, write $S^\ast_T$ the dual of $S$ in $T$. Then for every $k$-chain $\varphi$,
\[
D_1(\varphi)=
\sum_{\substack{
   T \text{ a }n\text{-simplex of }\Delta\\
  S \text{ a }k\text{-face of }T
  }}
 \epsilon_1(T) \epsilon_2(S,T) \varphi(S) \cdot S^\ast_T 
\]
where $\epsilon_1(T)$ is the sign of $T$ in some fundamental class of $M$, and $\epsilon_2(S,T)$ is the sign of $S$ in $\partial^{n-k}T$.

We leave to the reader to see why these signs give consistent orientations of dual cells when we glue every contiguous $n$-simplices together.

\mbox{}

In dimension 2, here are the different orientations involved:
\begin{figure}[H]
\centering
\includegraphics[width=115mm]{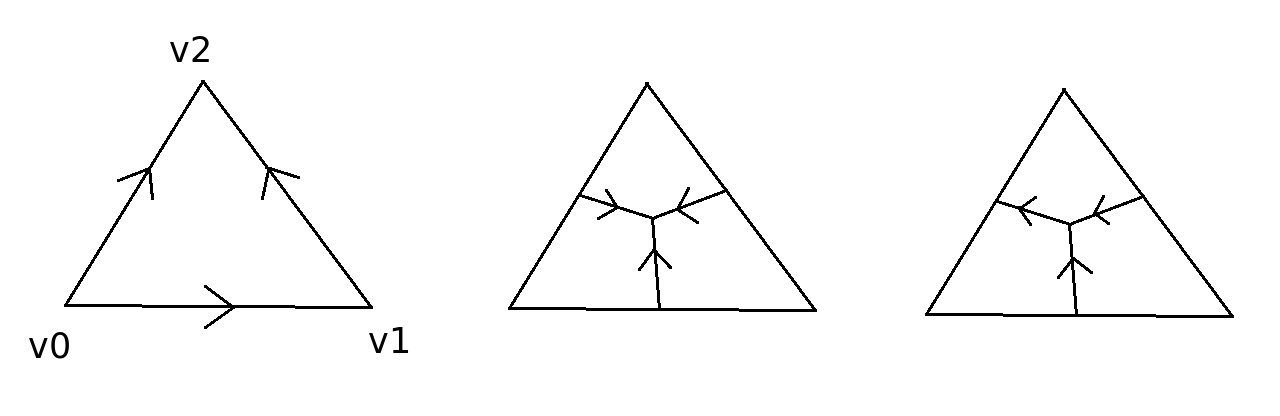}
\label{orcell}
\caption{The standard 2-simplex, with (from left to right) the standard orientation, the orientation of the dual 1-cells, and the same cells with coefficient $\epsilon_2([v_i,v_j])$}
\end{figure}

To finish this section, here are two examples of 1-cochains (first column), and their image under $D_1$ (second column).

\begin{figure}[H]
\centering
\includegraphics[width=90mm]{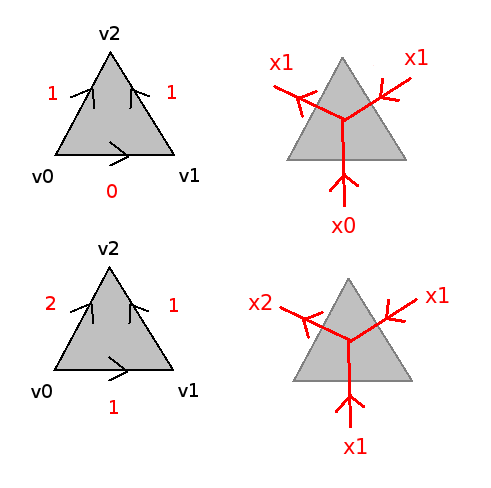}
\label{r1}
\end{figure}

\pagebreak
\section{Cap product and duality}
\label{cap}
In the same setting as before (namely for $M$ a compact, orientable, triangulated manifold, of dimension $n$ and triangulation $\Delta$), consider the cap product $ \cap : C^k\times C_n \rightarrow C_{n-k}$ defined by
\[
(\varphi,[v_0,\ldots,v_n]) \overset{\cap}{\longmapsto} \varphi([v_0,\ldots,v_k])\cdot [v_k,\ldots,v_n]
\]
where as usual, $C^p=\Ker \delta$ is the space of $p$-cocycles, and $C_p=\Ker \partial$ is the space of $p$-cycles.

Now, define $D_2: C^k \rightarrow C_{n-k}$ to be the linear map
\[
\varphi \overset{D_2}{\longmapsto} \varphi \cap [M]
\]
where $[M]$ is a fundamental class for $M$.

In other words, $D_2$ assigns (up to sign) to each cocycle $\varphi \in C^k$ and every $n$-simplex $T=[v_0,\ldots,v_n]$ the last $(n-k)$-face of $T$, with multiplicity the value of $\varphi$ on the first $k$-face. 

Here are two examples, in dimension 2, of 1-cochains (first column), and their image under $D_2$ (second column).

\begin{figure}[H]
\centering
\includegraphics[width=90mm]{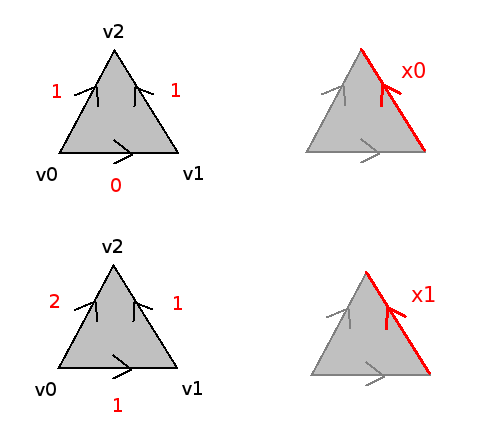}
\label{r2}
\end{figure}

Note that what $D_2$ does to every $n$-simplex is clear, since it forgets every face, except the last, and takes it with multiplicity the image of the first face under $\varphi$. Less clear is why that works, i.e. why we obtain an isomorphism; indeed, this process seem to forget a lot of information relative to $\varphi$, as we see in the previous picture.

The proof that $D_2$ is an isomorphism is usually done by using the bootstrap technique, see \cite{Hatcher}. We will attempt to give a more visual reason for that, in section \ref{isom}.

\mbox{}

Another natural question to ask is if $D_1$, defined in section \ref{cellulation}, and $D_2$ do the same work. It seems not, when comparing the respective images of the same cocycles (see section \ref{cellulation}). This matter will be elucidated in section \ref{deform} for $n=2$, and section \ref{generalization} for the general case.

\pagebreak
\section{Construction of level curves}
\label{levelcurves}
In dimension 2, there is a nice way of seeing what a 1-cochain must verify to be a cocycle.
Namely, a 1-cochain $ \varphi $ is a cocycle if, and only if, the following holds for every simplex (triangle) $ [v_0,v_1,v_2] $:
\[ \varphi ([v_0,v_2])= \varphi ([v_0,v_1])+\varphi ([v_1,v_2]). \]
Interpreting every $ \varphi ([v_i,v_j]) $ as a difference of elevation, we see that this formula guarantees that we get to the same elevation by going directly from $ v_0 $ to $ v_2 $, or by going first from $ v_0 $ to $ v_1 $, and then from $ v_1 $ to $ v_2 $. 
Thus we can construct locally (by that we mean on restriction to any triangle) altitudes that are consistent with $ \varphi $, as in the following figure:

\begin{figure}[H]
\centering
\includegraphics[width=110mm]{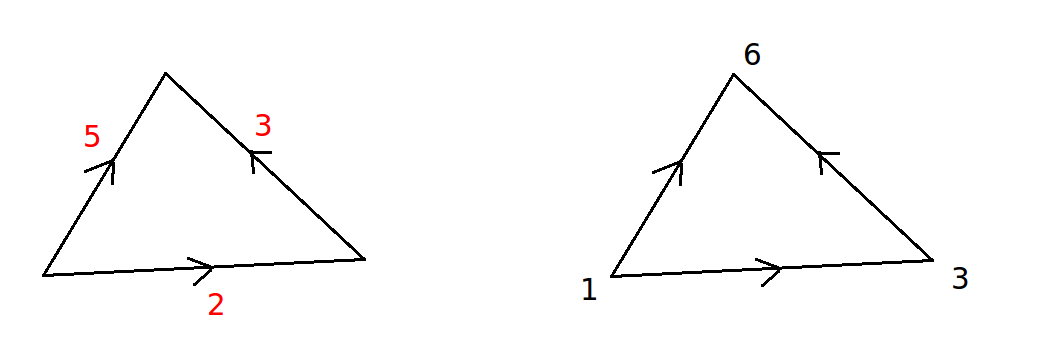}
\caption{A cocycle, represented in red, represents a difference of elevation, leading to altitudes on the vertices of the triangle, in black.}
\label{cn0}
\end{figure}

We would like to construct level curves for these local altitude, so that each triangle looks like a topographic map. That means that we are going to draw on the surface a system of distinct oriented loops, each corresponding to a constant elevation in our analogy, in such a way that the intersection number between the system of loops and each edge $ [v_i,v_j] $ is exactly $ \varphi( [v_i,v_j]) $. 

\begin{figure}[H]
\centering
\includegraphics[width=110mm]{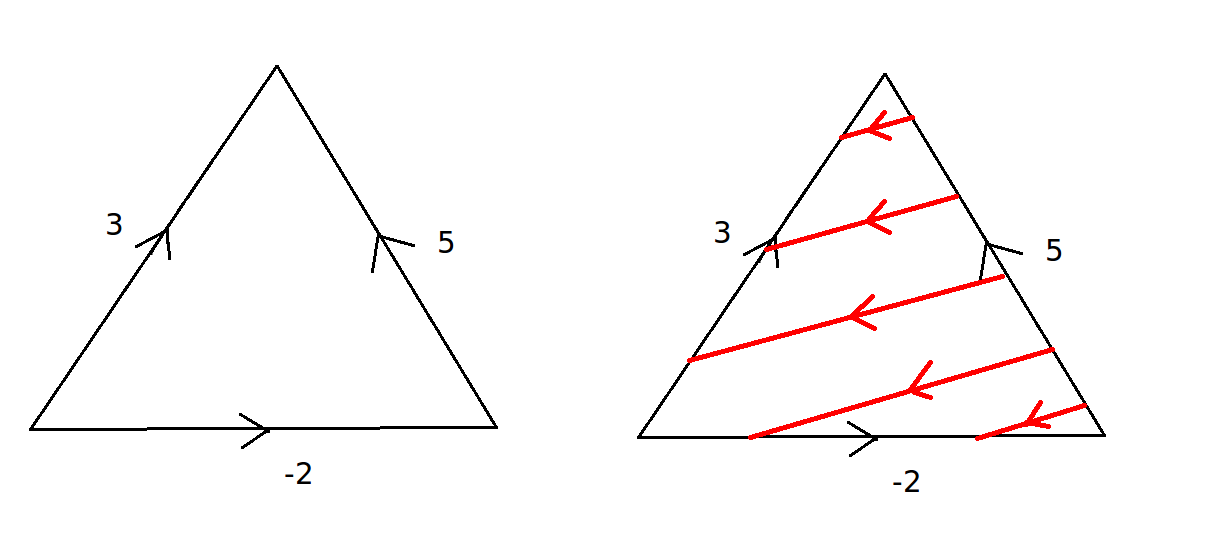}
\caption{Traveling on any edge $  [v_i,v_j] $, you cross exactly $ \varphi( [v_i,v_j]) $ level curves, counted with multiplicity.}
\label{ct0}
\end{figure}

For that purpose, let us restrict our attention to some triangle $ T=[v_0,v_1,v_2] $. We can always suppose that $ T $ is actually an euclidean triangle.
We can require the level curves to be piecewise linear, simply by thinking the vertices of the triangle to be of integer altitude,
 to extend this altitude linearly to the entire triangle, and then to take the level curves to be those lines on which the altitude is an integer plus a half, as shown on the next picture:

\begin{figure}[H]
\centering
\includegraphics[width=110mm]{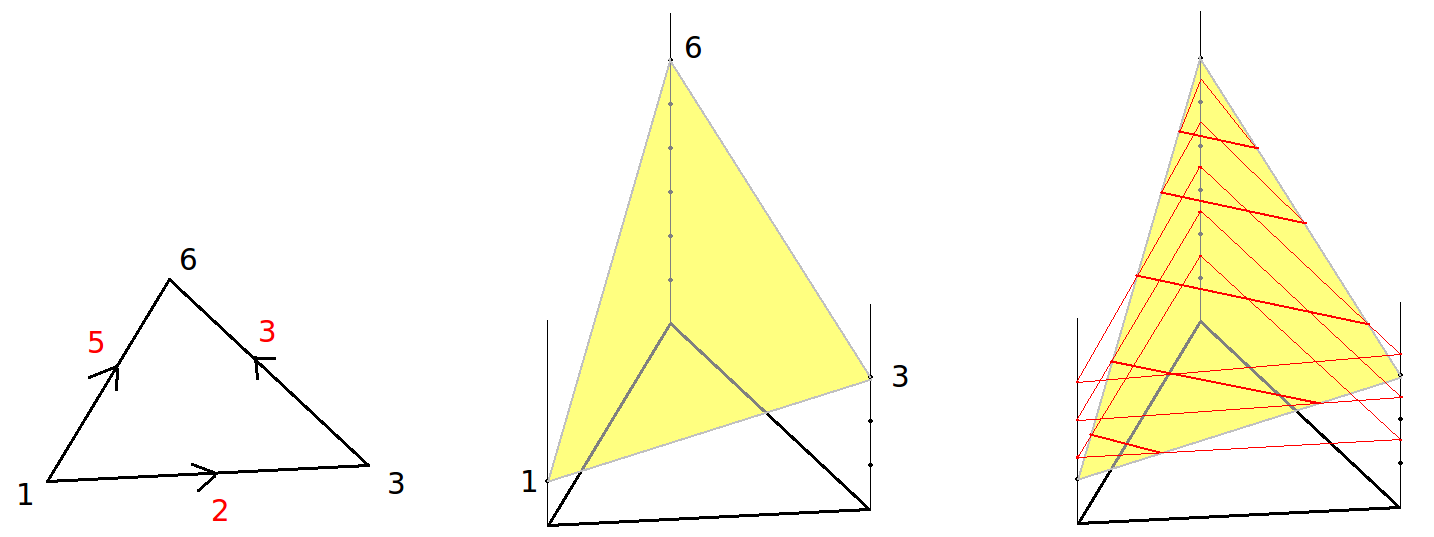}
\label{cn}
\end{figure}

Choosing the level curves to correspond to lines of elevation an integer plus a half, we see that if we go up 5 meters, say from altitude 1m to altitude 6m,  we cross exactly 5 curves.

Next, we orientate these curves using the following rule: the angle between the normal to a curve pointing to increasing elevation, and the direction of the curve itself should always be $ \pi / 2 $:

\begin{figure}[H]
\centering
\includegraphics[width=60mm]{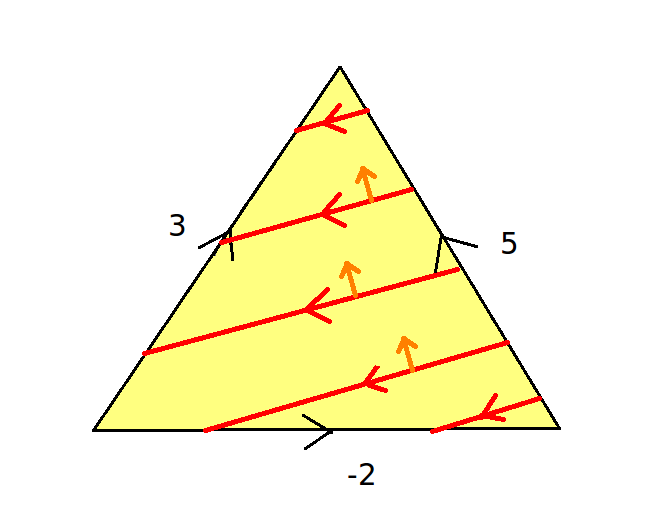}
\caption{On each level curve, the normal pointing to higher elevation, in orange, leads to an orientation of the level curve itself.}
\label{ct1}
\end{figure}

Note that if two triangles share the same edge, then the level curves of the first will end at the beginning of the level curves of the second, with consistent orientation:

\begin{figure}[H]
\centering
\includegraphics[width=60mm]{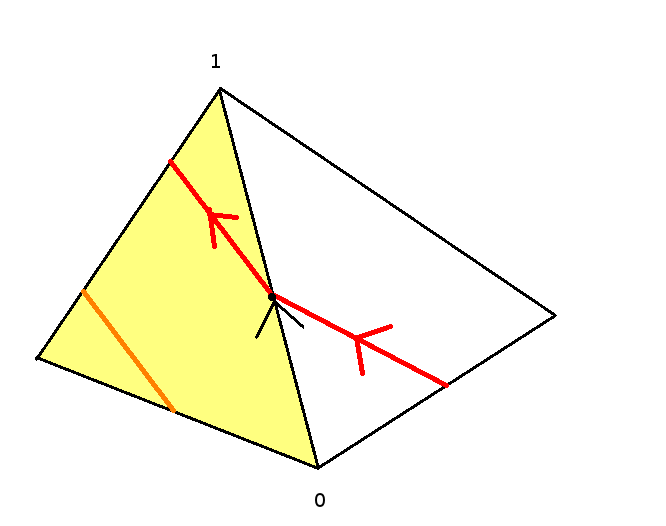}
\caption{Consistency of orientation of curves.}
\label{ct2}
\end{figure}

By assumption, there is a finite number of triangles in the triangulation, thus if we follow any level curve with the finger, we must come back to our starting point after entering and exiting finitely many triangles.

Thus the level curves of each triangle extend to distinct oriented loops on the surface, that we see as cycles in $ C_1 $. 

Therefore we can describe the whole process as a map $ D_3 : C^1 \rightarrow C_1 $. 

Note that at the moment we do not know what the addition of two level curves (as oriented loops) is.
The addition of these curves, considered as cycles in $ C_1 $, is well defined, but then it is unclear if the map $ D_3 $ is linear:

\begin{figure}[H]
\centering
\includegraphics[width=110mm]{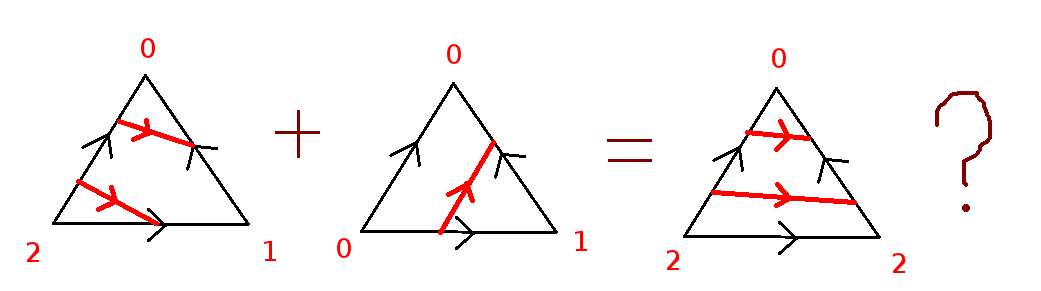}
\caption{Is $ D_3 $ a linear map?}
\label{sum}
\end{figure}

This matter will be settled in section \ref{deform}. 

\pagebreak
\section{Deforming level curves}
\label{deform}
Now that we know three different ways to link cohomology and homology for manifolds, let us show that they actually do ``the same job''. 

We only consider here the case where $n=2$, and $k=1$, so that we can talk about level curves (cf. section \ref{levelcurves}). 

Thus, start with any 1-cocycle $\varphi$, and consider the level curves $D_3(\varphi)$ associated to it.  We are going to deform these level curves in two different ways, so as to get both $D_1(\varphi)$ and $D_2(\varphi)$, where $D_1$ and $D_2$ were defined in sections \ref{cellulation} and \ref{cap}.

First, in any triangle of the initial triangulation $\Delta$, consider the level curve consisting of some fixed constant altitude. Deform it by making its vertices glide along the edges of the triangle, in the direction of the highest index, like in the following picture:

\begin{figure}[H]
\centering
\includegraphics[width=110mm]{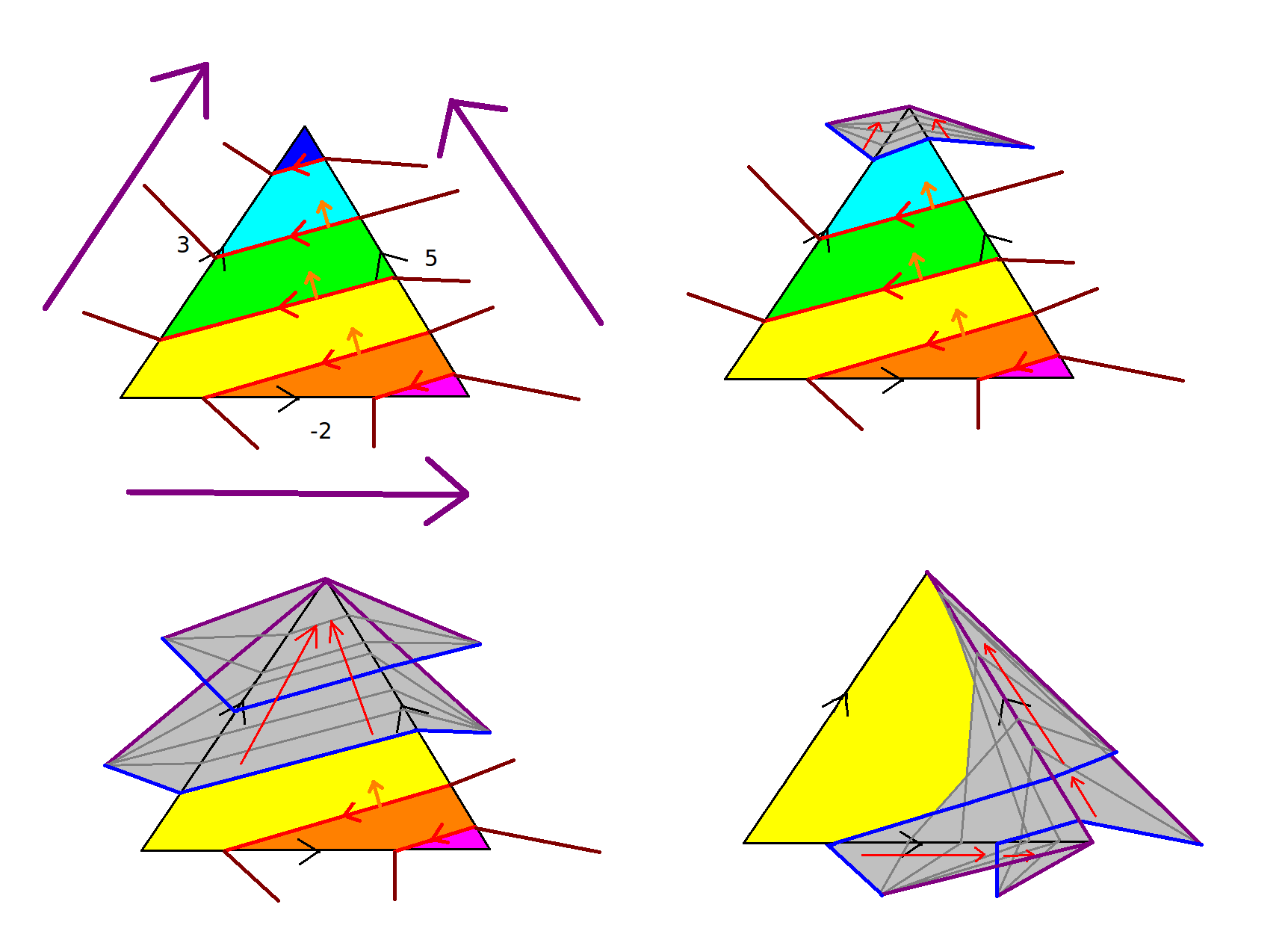}
\caption{Making vertices glide, in the direction of highest index.}
\label{ct13}
\end{figure}

Notice that we get here $-2$ times the vertex $ [v_1, v_2] $, which is exactly the cap product of the cocycle with the triangle $[v_0,v_1,v_2]$.

The same process can be undertaken for any triangle of the initial triangulation, as the following figure sketches:

\begin{figure}[H]
\centering
\includegraphics[width=110mm]{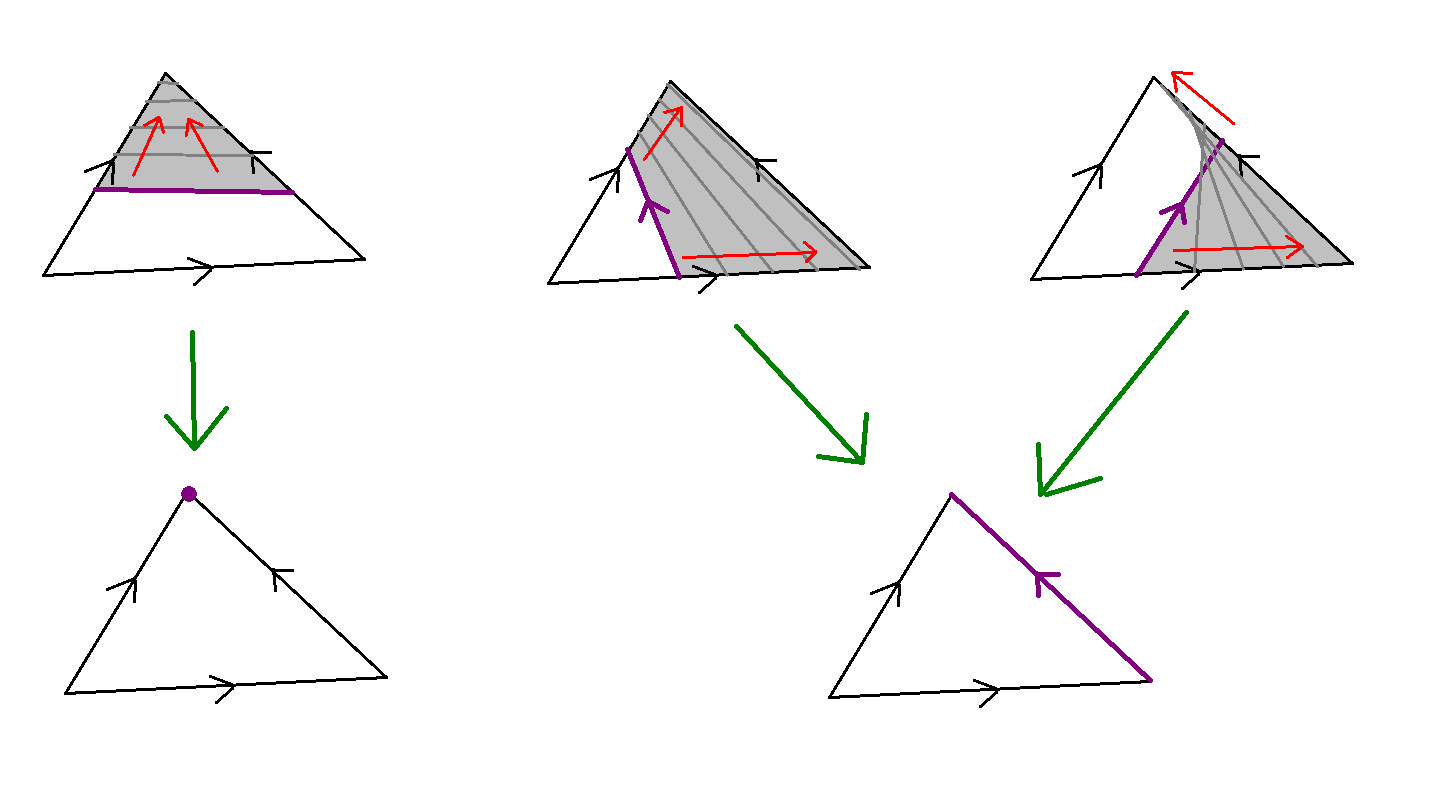}
\caption{Any edge joining $ [v_0, v_2] $ and $ [v_1, v_2] $ reduces to a point when its vertices glide toward the vertex of highest index $[v_3]$; any other edge is deformed into plus or minus $ [v_1, v_2] $.}
\label{cn10}
\end{figure}

As with see in figure \ref{cn10}, any edge joining $ [v_0, v_2] $ and $ [v_1, v_2] $ glides up toward the vertex $v_3$, thus degenerates into a point, and any other edge is deformed into $ [v_1, v_2] $ or its opposite, depending on the orientation of that edge. 
All in all, the result of that process on any triangle $ [v_1, v_2] $ is the edge $ [v_1, v_2] $, counted $ \varphi([v_0,v_1]) $ times. 

This result is exactly the cap product between $ \varphi $ and $ [v_1, v_2] $!

\mbox{}

Note that in the process, we only changed the curves by performing successive homotopies on them.
Thus, we did not change the class $D_3(\varphi)$, as regarded as an element of the homology $ H_1 = \Ker \partial / \im \partial $. 
Here of course, $H_1$ is thought to be a singular homology group.

\mbox{}

Similarly, we could make the level curves glide in the following way: first make every vertex of some level curve glide up to the center of the edge of the triangle that contains it; then make the center of the resulting edge glide freely within the triangle, up to the center of it, as the following three examples (to read vertically) show:

\begin{figure}[H]
\centering
\includegraphics[width=110mm]{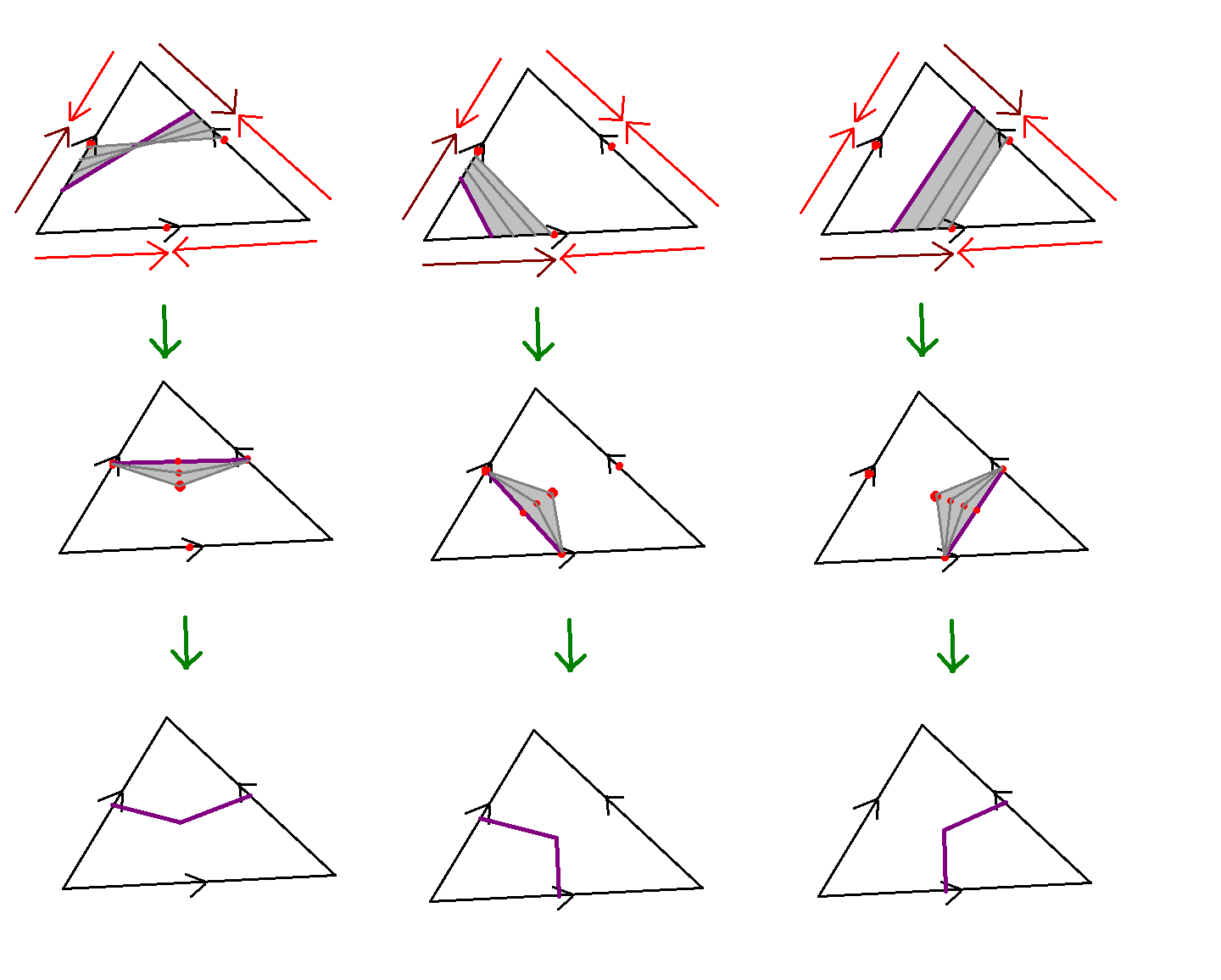}
\caption{Second way of making level curves glide.}
\label{cn11}
\end{figure}

This time, it is easy to see that the result of this process is exactly $D_1(\varphi)$.
As before, we did not change the class $D_3(\varphi)$, as we only applied homotopies to the curves.

We just proved the following proposition:
\begin{prop}
\label{prop}
$D_1$, $D_2$ and $D_3$ are equal, when regarded as mappings from $\Ker \delta$ to $H_1$.
\end{prop}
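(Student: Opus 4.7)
The plan is to formalize the homotopy argument sketched in figures \ref{cn10} and \ref{cn11}. The key observation is that for any $1$-cocycle $\varphi$, the level curves $D_3(\varphi)$ form an oriented collection of piecewise-linear closed loops on $M$, hence a $1$-cycle; any homotopy of these loops in $M$ produces a homologous cycle. Thus it suffices, for each fixed $\varphi$, to exhibit one continuous deformation from $D_3(\varphi)$ to $D_2(\varphi)$ and another from $D_3(\varphi)$ to $D_1(\varphi)$, each constructed simplex by simplex.

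First I would verify $D_3(\varphi) = D_2(\varphi)$ in $H_1$. Restricting attention to a single $2$-simplex $T=[v_0,v_1,v_2]$, the level curves inside $T$ are finitely many straight segments whose endpoints lie on the edges of $T$. I would use the straight-line homotopy in barycentric coordinates that slides each endpoint monotonically along its edge toward the vertex of highest index. Under this deformation, a segment joining the edges $[v_0,v_2]$ and $[v_1,v_2]$ collapses to the vertex $v_2$, while any other segment is carried onto a signed multiple of $[v_1,v_2]$. Counting multiplicities according to the orientation convention of section \ref{levelcurves}, the total contribution of $T$ is $\varphi([v_0,v_1])\cdot [v_1,v_2]$, which is exactly the cap product $\varphi \cap T$.

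Second I would verify $D_3(\varphi) = D_1(\varphi)$ in $H_1$ via a different homotopy: slide each endpoint of a level segment from its position on an edge to the midpoint of that edge, then continue by pushing the resulting segment, whose endpoints are now edge-midpoints, onto the pair of radial half-segments joining those midpoints to the barycenter of $T$. A segment entering $T$ through edge $e_1$ and leaving through edge $e_2$ is thereby replaced by the two dual $1$-cells associated with $e_1$ and $e_2$ inside $T$, each oriented by the induced direction of the level curve. Summing over all segments and all triangles, and tracking the signs induced by the level-curve orientation, reproduces the combinatorial formula for $D_1(\varphi)$ together with its $\epsilon_1(T)\,\epsilon_2(S,T)$ coefficients.

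The main obstacle will be checking that these per-simplex homotopies glue to a genuine global homotopy of the closed loops, so that the homology class is indeed preserved. At an edge $[v_i,v_j]$ shared by two triangles, the endpoint of a level segment must follow the same trajectory on the edge regardless of which side one looks from. For the first homotopy this is immediate because both sides slide the endpoint toward the same higher-indexed vertex of the shared edge; for the second homotopy both sides slide the endpoint toward the midpoint of that edge. Once this compatibility is recorded, each local homotopy extends to a global one, the level cycle is deformed without altering its homology class, and matching the terminal configurations with the combinatorial definitions of $D_1$ and $D_2$ finishes the proof. As a byproduct this also shows that $D_3$ descends to a well-defined linear map $\Ker\delta \to H_1$, resolving the question raised at the end of section \ref{levelcurves}.
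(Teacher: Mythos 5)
Your proposal is correct and follows essentially the same route as the paper: one homotopy sliding level-curve endpoints toward the highest-indexed vertex to recover the cap product $D_2(\varphi)$, and a second sliding them to edge midpoints and then to the barycenter to recover the dual-cell cycle $D_1(\varphi)$, with homology invariance coming from the fact that only homotopies are applied. Your explicit check that the per-simplex deformations agree on shared edges is a point the paper leaves implicit, but it does not change the argument.
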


As a direct consequence, properties that are clear for one of these maps are clear for all. For instance, it is clear that $D_2$ is linear, so $D_3$ is linear also. It is clear, from a geometrical viewpoint, that $D_3(\varphi)$ is always a cycle, so whenever $\varphi$ is a cocycle, $D_1(\varphi)$ and $D_2(\varphi)$ are cycles. That fact is usually proven in a purely algebraic way.

\pagebreak
\section{Generalization to higher dimensions}
\label{generalization}

In higher dimension, there is no direct analogue of level curves. Nevertheless, we can still deform the dual cellulation associated to a given cocycle $\varphi$ (i.e. $D_1(\varphi)$), so as to get the cap -product defined cycle $D_2(\varphi)$.

Formally, for any compact orientable triangulated manifold $M$ of dimension $n$ with triangulation $\Delta$, 
let $T=[v_0, \ldots, v_n]$ be a $n$-simplex of $\Delta$, and let $S=[v_{i_0}, \ldots, v_{i_k}]$ be some $k$-face of $T$, where as usual $i_1< \ldots <i_k$.

Recall that the dual cell associated to $S$ in $T$ is the convex hull of all barycenters of subsets of vertices of $T$ that contain the vertices of $S$.

For $n=3$, we get the following picture:

\begin{figure}[H]
\centering
\includegraphics[width=105mm]{HighDim2.png}
\label{highdim2}
\end{figure}

\mbox{}

The way we are going to make these barycenters move is in direct analogy with what we did in dimension 2: make the barycenter of $\{ v_{j_0}, \ldots, v_{j_l}\}$ glide within the face $[v_{j_0}, \ldots, v_{j_l}]$, in the direction of the vertex of highest index.

In that process, the barycenter of every $\{ v_{j_0}, \ldots, v_{j_l} \}$ glides up to $v_{j_l}$.
Thus, the dual cell associated to $S=[v_{i_0}, \ldots, v_{i_k}]$ is easily seen to be deformed into $[v_{i_k}, v_{i_k + 1}, \ldots v_n]$.

That face is of dimension $n-i_k$, so it is either a genuine $(n-k)$-face if $i_k=k$ (i.e. $S=[v_0, \ldots, v_k]$), or it is degenerate (i.e. of dimension less than $n-k$) and will add up to nothing when we sum up every $k$-simplex of a fundamental class.

Thus, we can ignore these latter faces, and see that the only $(n-k)$-face left is $[v_k, \ldots, v_n]$, coming from the $k$-face $[v_0, \ldots, v_k]$.

Therefore, if you start with any cocycle $\varphi$ and restrict your attention to $T$, then make the dual cellulation of $\varphi$ move in the way explained above, you get the $(n-k)$-face $[v_k,\ldots,v_n]$, counted $\varphi([v_0,\ldots,v_k])$ times.

Thus as before, $D_1$ and $D_2$ are equal, when viewed as maps with image in $H_{n-k}$.

We believe that the 3-dimensional case is enlightening:

\begin{figure}[H]
\centering
\includegraphics[width=140mm]{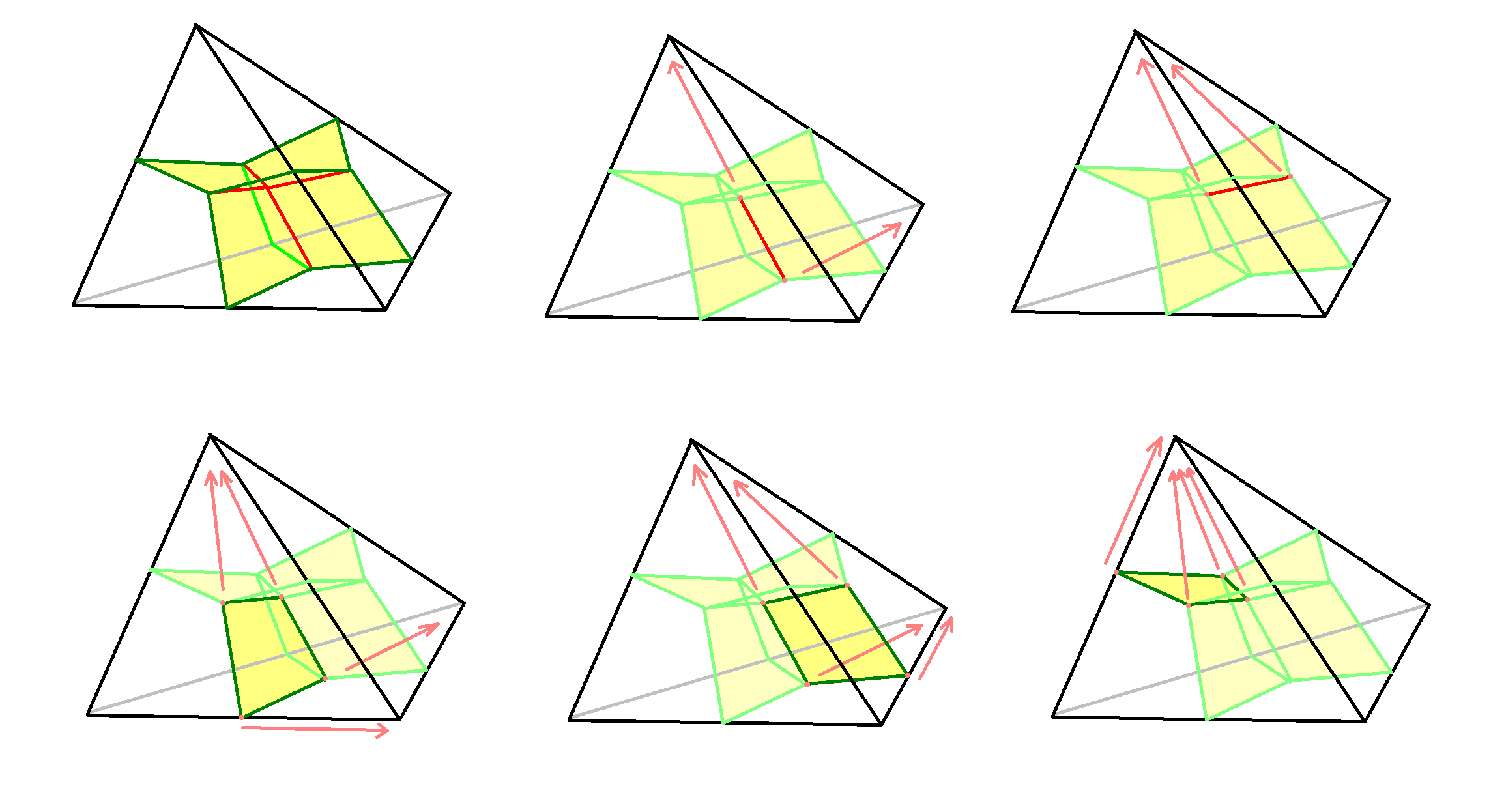}
\label{highdim}
\caption{The standard 3-simplex, its dual cellulation, and how to make vertices glide.}
\end{figure}

More precisely, what we see on the previous figure is:
\begin{itemize}
\item (from left to right, first row) The standard 3-simplex and its dual cellulation; the dual cell associated to $[v_0, v_1, v_2]$ glides up to $[v_2,v_3]$; the dual cell associated to $[v_1,v_2,v_3]$ degenerates into the point $v_3$;
\item (from left to right, second row) The dual cell associated to $[v_0,v_1]$ glides up to the face $[v_1,v_2,v_3]$; the dual cell associated to $[v_1,v_2]$ degenerates into the edge $[v_2,v_3]$ and the dual cell associated to $[v_0,v_3]$ degenerates into the point $v_3$.
\end{itemize}

\pagebreak
\section{$D_3$ is an isomorphism}
\label{isom}
Trying to explain in a geometrical way why $D_1$ (or $D_2$) induces a map from $H^k $ to $H_{n-k}$ and why it is then an isomorphism, seems to be very close to doing the verifications directly with formulas, in an algebraic way. Thus, we will only show that, in the restrictive case where $n=2$, $D_3$ induces an isomorphism.

It is actually very natural to consider $D_3$ instead of $D_1$ or $D_2$ for that matter, since the latter are obtained by collapsing level curves (see section \ref{deform}); this is also motivated by Allen Hatcher's discussion about level curves in ``The idea of Cohomology'', in \cite{Hatcher}.

\mbox{}

To show that $D_3$ induces a map from $ H^1$ to $ H_1 $, we have to show that a coboundary is sent by $ C $ to a boundary. In fact, the converse is also true, as we see from the two following lemmas.

\begin{lemma}
\label{l1}
A cocycle $ \varphi $ is a coboundary if, and only if, the altitude-maps $ \psi_T $ (defined up to an integer) of every triangle $ T $ of the triangulation $ \Delta $ can be extended to form a global altitude $ \psi $, defined on the vertices of $\Delta$, which is consistent with $ \varphi $.
\end{lemma}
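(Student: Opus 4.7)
My plan is to observe that the lemma is essentially a translation between two equivalent formulations of the coboundary condition, so the proof will consist mostly of carefully unpacking the definitions introduced in section \ref{levelcurves}. Recall that a 0-cochain $\psi$ on $\Delta$ is by definition an integer-valued function on the vertices of $\Delta$, and that $\varphi = \delta \psi$ means exactly that $\psi(v_j) - \psi(v_i) = \varphi([v_i,v_j])$ for every edge $[v_i,v_j]$ of $\Delta$. On the other hand, the local altitude map $\psi_T$ on a triangle $T = [v_0,v_1,v_2]$ was constructed in section \ref{levelcurves} as an assignment of integers to the three vertices of $T$ such that the differences along each edge of $T$ reproduce the values of $\varphi$; this assignment exists precisely because $\varphi$ is a cocycle, and it is determined up to adding a single integer constant to all three values.

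For the forward direction, I would assume that $\varphi = \delta \psi$ for some global 0-cochain $\psi$, and simply take $\psi_T$ to be the restriction of $\psi$ to the vertices of $T$. The compatibility condition is automatic, and the resulting local altitudes are visibly the restrictions of a single global altitude function $\psi$.

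For the converse, I would start from local altitude maps $\psi_T$ that can be extended to a global function $\psi$ on the vertices of $\Delta$ consistent with $\varphi$. By what "consistent" means, this $\psi$ satisfies $\psi(v_j) - \psi(v_i) = \varphi([v_i,v_j])$ on every edge that lies in some triangle, hence on every edge of $\Delta$. This is exactly $\delta \psi = \varphi$, so $\varphi$ is a coboundary.

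I do not expect any real obstacle: the content of the lemma is the observation that the geometric picture of "altitudes defined globally on vertices" coincides, coordinate-wise, with the algebraic notion of a 0-cochain whose coboundary is $\varphi$. The only point that deserves care is the ambiguity of $\psi_T$ up to an integer: the nontrivial statement is that the freedom to shift each local $\psi_T$ independently can, or cannot, be used to make the local values agree on shared vertices, and this is precisely the obstruction measured by the class of $\varphi$ in $H^1$.
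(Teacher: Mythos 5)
Your proof is correct and follows essentially the same route as the paper's: both reduce the lemma to the observation that ``consistent with $\varphi$'' for a global vertex function $\psi$ means precisely $\varphi=\delta\psi$, so the statement is a direct unpacking of definitions. Your extra remark about the integer ambiguity of the local $\psi_T$ and the obstruction in $H^1$ is a sensible aside but not needed for the lemma itself.
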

\begin{proof}
Saying that the altitude $ \psi $ is consistent with $\varphi$ means that for each edge $ [v_0, v_1] $, the difference of elevation between the two points is $ \varphi([v_0,v_1])= \psi(v_1)-\psi(v_0) $, i.e. it is saying that $\varphi=\delta \psi$.
\end{proof}

\begin{lemma}
A cocycle is a coboundary if, and only if the level curves associated to it are boundaries in $ H_1 $.
\end{lemma}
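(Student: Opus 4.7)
The plan is to lift the statement from a purely combinatorial claim to a geometric one using the defining property of level curves: for every edge $e$ of $\Delta$, the signed count of crossings of $e$ with $D_3(\varphi)$ equals $\varphi(e)$, so by linearity $\gamma \cdot D_3(\varphi) = \varphi(\gamma)$ for every $1$-chain $\gamma$. Combined with Lemma \ref{l1}, this lets us convert ``$\varphi$ is a coboundary'' into the existence of a globally consistent altitude, and ``$D_3(\varphi)$ is a boundary'' into a vanishing intersection condition.

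For the forward implication, suppose $\varphi = \delta \psi$ for some $\psi \in C^0$. By Lemma \ref{l1}, the vertex values of $\psi$ assemble into a globally consistent altitude, which I extend linearly inside each triangle to a continuous function $M \to \mathbb{R}$. The level curves comprising $D_3(\varphi)$ are then the preimages of the half-integer values of this function, oriented as in Figure \ref{ct1}. Each such level curve at altitude $m+\tfrac{1}{2}$ is the boundary (with the orientation convention of the paper) of the sublevel region $\{\psi \le m\}$, viewed as a singular $2$-chain on $M$, and summing over $m$ exhibits $D_3(\varphi)$ as a $1$-boundary.

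For the converse, suppose $D_3(\varphi) = \partial c$ for some $2$-chain $c$. Working componentwise, I fix a base vertex $v_*$, set $\psi(v_*) = 0$, and for every other vertex $v$ define $\psi(v) := \varphi(\gamma)$ for any edge-path $\gamma$ from $v_*$ to $v$. Path-independence is the crux: if $\gamma_1, \gamma_2$ are two such paths, then $\gamma_1 - \gamma_2$ is a $1$-cycle, so
\[
\varphi(\gamma_1 - \gamma_2) \;=\; (\gamma_1 - \gamma_2) \cdot D_3(\varphi) \;=\; (\gamma_1 - \gamma_2) \cdot \partial c \;=\; 0.
\]
Once $\psi$ is well-defined on vertices, $\delta \psi = \varphi$ holds by construction, and Lemma \ref{l1} then yields that $\varphi$ is a coboundary.

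The step requiring the most care is the final equality $(\gamma_1 - \gamma_2) \cdot \partial c = 0$. In the visual spirit of the paper this should not be quoted as a consequence of the intersection pairing on $H_1$; instead, after perturbing the cycle to be transverse to $\partial c$, one argues that its intersections with $\partial c$ come in cancelling pairs, one for each arc of the cycle lying inside $c$ (each arc contributes $+1$ on entry and $-1$ on exit). This is the one place where the argument genuinely uses the $2$-dimensionality and orientability of $M$, and it is also the geometric content of Poincaré duality hiding behind the lemma.
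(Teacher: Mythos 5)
Your forward direction is the paper's argument almost verbatim: use Lemma \ref{l1} to get a global altitude, extend it linearly, recognize the level curves as the half-integer level sets, and bound each one by its sublevel region viewed as a $2$-chain. The converse is where you genuinely diverge. The paper writes the bounding chain as $\beta=\sum_i \lambda_i T_i$ over a subdivision and shows, by induction over adjacent triangles, that $\partial\beta=D_3(\varphi)$ forces the $\lambda_i$ to be constant on each complementary region of the level curves and to jump by the crossing multiplicity across them; the $\lambda_i$ themselves are then the altitude. You instead integrate $\varphi$ along edge-paths from a basepoint and prove path-independence via $\varphi(z)=z\cdot D_3(\varphi)=z\cdot\partial c=0$. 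Both converses are correct, and yours is cleaner to state because it isolates the real content in one classical fact (a $1$-cycle has zero intersection number with a $1$-boundary on a closed oriented surface). But note that your ``cancelling pairs, $+1$ on entry and $-1$ on exit'' sketch implicitly treats $c$ as a region of multiplicity one; for a general integer chain $c=\sum_i\lambda_i T_i$ you need the telescoping version, in which crossing the edge between $T_i$ and $T_j$ contributes $\lambda_i-\lambda_j$ and the sum around a closed loop vanishes --- which is exactly the paper's induction on adjacent triangles in disguise. So the two arguments are the same computation packaged differently: the paper's coefficients $\lambda_i$ are your $\psi$, read off region by region rather than integrated along paths. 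The paper's packaging produces the altitude directly and pictorially; yours makes explicit the role of the intersection pairing, and hence of orientability, behind the lemma. (Also, no perturbation is actually needed in your last step: the edges of $\Delta$ are already transverse to the level curves by construction.)
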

\begin{proof}
Direct direction: if $\varphi$ is a coboundary, then by lemma \ref{l1}, there is an altitude $\psi$ on the vertices of the triangulation, that is consistent with $\varphi$. Extend it linearly on the whole manifold $M$. Then by construction of $D_3$, every curve of  $D_3(\varphi)$ is of constant elevation $ \psi = n + 1/2 $ for some $n$. 
Write $ c_n $ for the curve of constant elevation $ \psi = n + 1/2 $.

It is easy to see that the topological boundary of each $ \alpha_n= \{ x \in M,\; \psi(x) \leq n+ 1/2\} $ is the non-oriented loop which is the trace of $ c_n $.
We leave to the reader to see how to tile the $ \alpha_n $ with signed triangles so that they become 2-chains in $ H_2 $ with boundary $ c_n \in H_1 $.
Then the image of $ \varphi $ under $ D_3 $ is by construction $ \sum_n c_n $, that is $ \sum_n \partial \alpha_n $, which is obviously a boundary.

\begin{figure}[H]
\centering
\includegraphics[width=110mm]{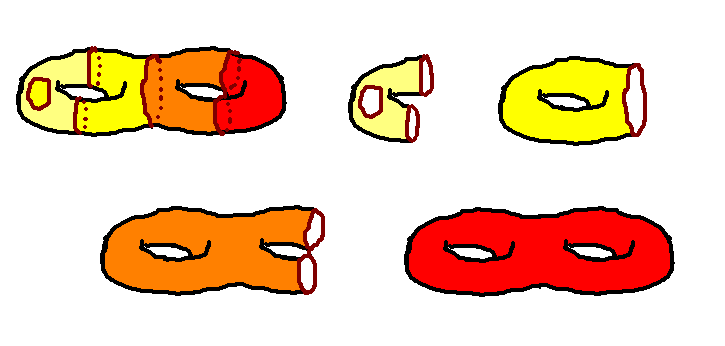}
\caption{We can always tile the colored regions $\alpha_n$ with triangles, so that their boundary is~$ c_n $.}
\label{torus6}
\end{figure}

Indirect direction: suppose $D_3(\varphi)$ is a boundary. Then, notice it is a boundary of a chain in $H_2$ which is the sum of triangles of some subdivision $\widetilde{\Delta}$ of $\Delta$. 
Let us write $ T_i $ for the triangles of $\widetilde{\Delta} $, signed so that $ [M]=\sum_i T_i $ is a fundamental class of $M$.
Then we can write $ \beta  $ as the sum $ \beta=\sum_i \lambda_i T_i $ with $ \lambda_i $ integers.

After examination of the condition that $ \partial \beta $ is the image of $ \varphi $ under $ D_3 $, we see by induction on adjacent triangles of $\Delta$ that $ \lambda_i = \lambda_j $ for every triangles $T_i$, $T_j$ on the same topological connected component of $ M $ minus the trace of the level curves:

\begin{figure}[H]
\centering
\includegraphics[width=85mm]{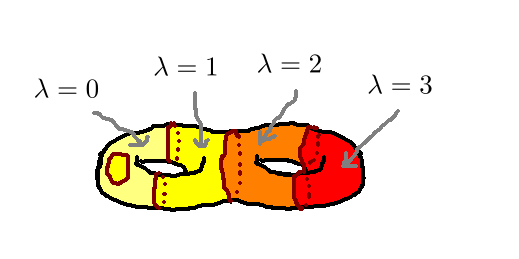}
\label{torus7}
\end{figure}

Since every vertex of $\Delta$ is not on the trace of any level curve, we can define $\psi$ to assign to every vertex $[v]$ of $\Delta$ the integer $\lambda_i$, where the corresponding triangle $T_i$ contains $[v]$. By the preceding remark, this does not depend on the choice of a triangle containing $[v]$.

We then obtain the required altitude; details are left to the reader.
\end{proof}

We obtain, as a direct corollary:
\begin{cor}
\label{cor}
$D_3$ induces an injective linear map $ H^1 \mapsto H_1$, that we still call $D_3$.
\end{cor}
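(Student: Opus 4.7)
The plan is to read off the corollary almost immediately from the preceding lemma together with Proposition \ref{prop}, since between them they already contain the three ingredients we need: well-definedness on cohomology, linearity, and injectivity. My only real job is to glue these facts together in the right order.

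First I would address well-definedness. The forward direction of the preceding lemma states that if $\varphi$ is a coboundary, then $D_3(\varphi)$ is a boundary. Consequently the map $\varphi \mapsto [D_3(\varphi)] \in H_1$ descends to a map $H^1 \to H_1$, which we still call $D_3$. Next, for injectivity, suppose $[\varphi] \in H^1$ is sent to $0 \in H_1$, i.e.\ $D_3(\varphi)$ is a boundary. The indirect direction of the same lemma then gives that $\varphi$ is a coboundary, so $[\varphi] = 0$ in $H^1$.

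It remains to check linearity, and here I would invoke Proposition \ref{prop}: the maps $D_2$ and $D_3$ agree once their images are regarded as classes in $H_1$. But $D_2(\varphi) = \varphi \cap [M]$ is manifestly linear in $\varphi$, since the cap product is bilinear in its two arguments and $[M]$ is fixed. Therefore the induced map $D_3 : H^1 \to H_1$ inherits linearity from $D_2$.

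The hard part was genuinely the indirect direction of the lemma (the induction on adjacent triangles to reconstruct a global altitude from a bounding 2-chain), but that work is already done. What remains for the corollary is purely structural: using the biconditional of the lemma to get both well-definedness and injectivity in one stroke, and borrowing linearity from the algebraic map $D_2$ via Proposition \ref{prop} rather than trying to prove it directly for $D_3$, where the sum of two systems of level curves is not a priori a system of level curves (compare Figure \ref{sum}).
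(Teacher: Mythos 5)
Your proposal is correct and matches the paper's (largely implicit) reasoning exactly: well-definedness and injectivity both fall out of the two directions of the preceding lemma, and linearity is borrowed from $D_2$ via Proposition \ref{prop}, just as the paper observes in the remark following that proposition. Nothing is missing.
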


To show $D_3$ is an isomorphism, we only have left to show that it is surjective, which is the purpose of the next lemma.

\begin{lemma}
\label{surj11}
$D_3$ is surjective.
\end{lemma}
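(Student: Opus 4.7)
My strategy is to invert $D_3$ geometrically: given a homology class in $H_1(M)$, I will produce a cocycle whose level curves represent it. Start by choosing a representative 1-cycle $c$ in general position with respect to $\Delta$, meaning $c$ meets the 1-skeleton transversally, avoids every vertex, and is disjoint from the barycenter of each 2-simplex. Orient $c$, and define a 1-cochain $\varphi \in C^1$ on each oriented edge $e$ by
\[
\varphi(e) = \text{signed intersection number of } c \text{ with } e,
\]
where the sign convention is the one used to orient level curves in section \ref{levelcurves}.

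A direct computation shows that $\varphi$ is a cocycle. On any 2-simplex $T=[v_0,v_1,v_2]$,
\[
\delta\varphi(T) = \varphi([v_0,v_1]) - \varphi([v_0,v_2]) + \varphi([v_1,v_2])
\]
is the net signed count of crossings of $\partial T$ by $c$. Since every arc of $c\cap T$ enters and exits $T$ through $\partial T$ with opposite signs (and closed loops of $c$ lying inside $T$ make no contribution), this vanishes.

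It remains to show $[D_3(\varphi)] = [c]$ in $H_1(M)$. Both $c$ and $D_3(\varphi)$ have the same algebraic intersection $\varphi(e)$ with each edge $e$, so I can cancel algebraically opposite crossings of $c$ on $e$ by a finger move: push a pair of oppositely signed intersection points through a small disk into a neighboring triangle, changing $c$ only by the boundary of that disk and leaving its homology class unchanged. After such normalization, I slide the remaining crossings along edges to coincide, with matching signs, with the crossings of $D_3(\varphi)$. The resulting cycle $c'$ is homologous to $c$ and agrees with $D_3(\varphi)$ on the 1-skeleton. On each 2-simplex $T$ the chains $c'|_T$ and $D_3(\varphi)|_T$ then have identical boundary 0-chains on $\partial T$, so their difference is a 1-cycle in the disk $T$ and bounds a 2-chain $\beta_T\subset T$. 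Summing, $\beta = \sum_T \beta_T$ satisfies $\partial\beta = c' - D_3(\varphi)$, and $[D_3(\varphi)] = [c'] = [c]$ follows.

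The most delicate step is the crossing-normalization: checking that each finger move and each slide along an edge alters $c$ by an exact boundary, so that the homology class is really preserved. Each such move is local, supported in a small disk, and the isotopy itself sweeps out an explicit 2-chain witnessing the homology. Granted that, the rest is just the observation that $H_1$ of a disk vanishes, done simplex by simplex.
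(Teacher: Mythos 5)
Your proof is correct and rests on the same underlying idea as the paper's --- represent the class by a curve transverse to the triangulation and read off a cocycle from signed intersection numbers with the edges --- but the execution is genuinely different. The paper first reduces to a simple closed loop, thickens it to a band of triangles (using orientability of $M$ to exclude the M\"{o}bius-band configuration), and perturbs the loop until it literally \emph{is} a level curve of the resulting $\pm 1$-valued cocycle, so no further homology check is needed. You instead work with an arbitrary general-position cycle $c$, verify the cocycle condition $\delta\varphi=0$ directly from the fact that each arc of $c$ entering a triangle must leave it, and then establish $[D_3(\varphi)]=[c]$ by cancelling oppositely signed crossings with finger moves and absorbing the remaining difference into the 2-simplices, where $H_1$ of a disk vanishes. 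Your route avoids the slightly delicate reduction to a simple closed loop (not every class is a single embedded curve, though every class is a disjoint union of them), at the price of the explicit crossing-normalization argument; in exchange it supplies the verification, left implicit in the paper's ``that loop is clearly the level curve of some cocycle,'' that the transversal curve really is homologous to the level-curve system of the cocycle it defines. Both arguments use orientability in the same essential place: yours through the sign convention of section \ref{levelcurves} needed to make the intersection numbers well defined, the paper's through the exclusion of the inconsistent band.
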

\begin{proof}
Start with any cycle of $H_1$. We can in fact assume it is represented by a simple closed loop.
Thicken it to a band of triangles, as in the following figure (where we have to think of the right and left parts of the band as being glued together):

\begin{figure}[H]
\centering
\includegraphics[width=110mm]{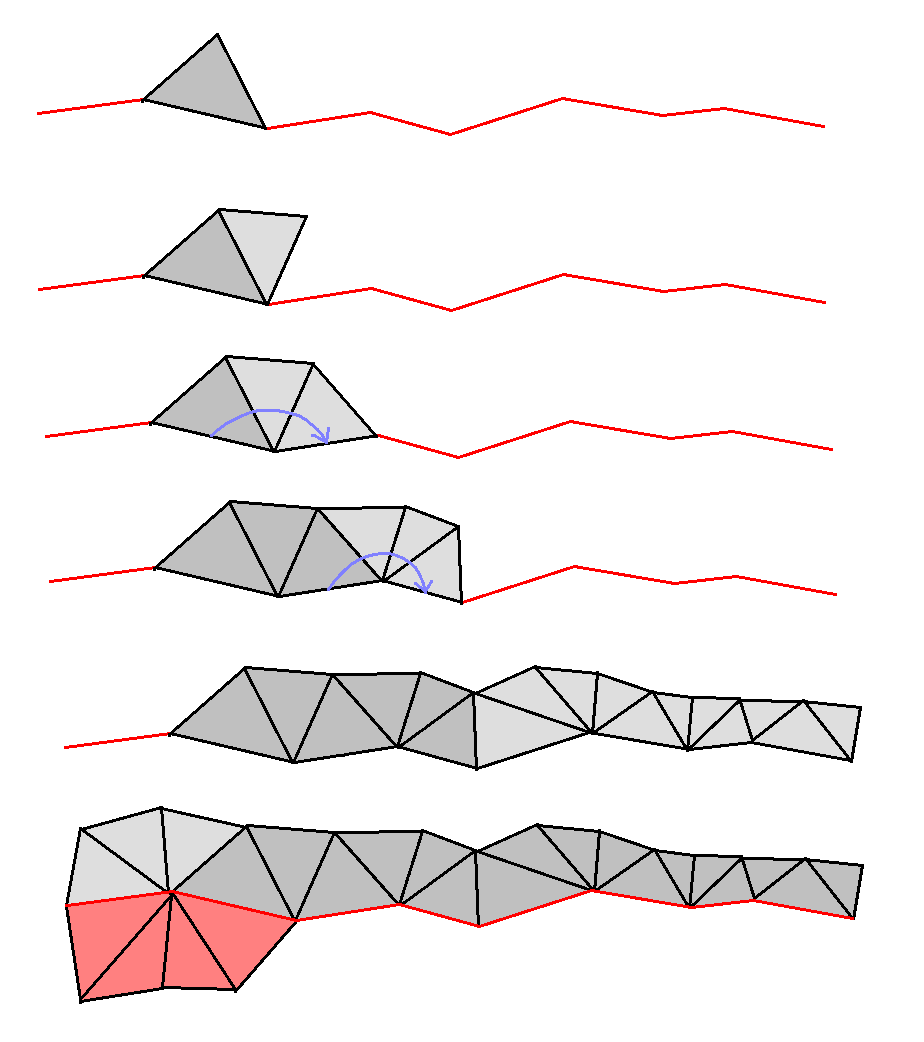}
\label{surj}
\end{figure}

The case in pink does not appear, since $M$ is orientable. 

Perform a perturbation on the loop, in order to get a loop transversal to the triangulation (in pink in the following figure)

\begin{figure}[H]
\centering
\includegraphics[width=110mm]{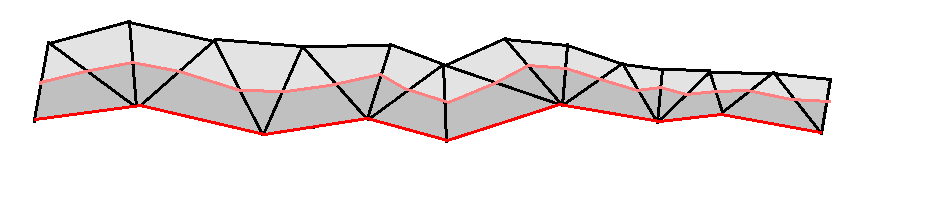}
\label{surj2}
\end{figure}

That loop is clearly the level curve of some cocycle, which is $\pm 1$ on every edge of $\Delta$ that the pink loop crosses, and 0 elsewhere (details about orientation are left to the reader).
Since it is homotopic to the red cycle, we are done.
\end{proof}

\begin{cor}
In dimension 2, $D_1$, $D_2$ and $D_3$ are all isomorphisms.
\end{cor}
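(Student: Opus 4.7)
The plan is simply to assemble the results already proved. Proposition \ref{prop} asserts that $D_1$, $D_2$ and $D_3$, viewed as maps $\Ker\delta \to H_1$, literally coincide. Corollary \ref{cor} together with Lemma \ref{surj11} assert that $D_3$ descends to a bijective linear map $H^1 \to H_1$. The corollary to be proved is the obvious consequence of combining these two facts.

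First I would note that because Proposition \ref{prop} gives equality of $D_1$, $D_2$, $D_3$ as maps into $H_1$ (not merely into $C_1$), and because Corollary \ref{cor} says $D_3$ sends every coboundary to zero in $H_1$, the same holds automatically for $D_1$ and $D_2$. Hence all three maps descend to maps $H^1 \to H_1$, and these three descended maps are the same map. Second, Corollary \ref{cor} gives injectivity of the induced $D_3$ and Lemma \ref{surj11} gives surjectivity, so $D_3 : H^1 \to H_1$ is an isomorphism; since $D_1$ and $D_2$ induce the same map on cohomology, they are isomorphisms as well.

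There is no serious obstacle: this statement is essentially bookkeeping. The only point worth being careful about is making explicit that $D_1$ and $D_2$ do descend to $H^1$ (rather than just agreeing with $D_3$ on $\Ker\delta$), which is immediate from the way Proposition \ref{prop} is stated, with target already $H_1$.
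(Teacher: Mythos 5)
Your proof is correct and matches the paper's own argument, which likewise just combines Proposition \ref{prop}, Corollary \ref{cor} and Lemma \ref{surj11}. Your extra remark that $D_1$ and $D_2$ automatically descend to $H^1$ because they coincide with $D_3$ as maps into $H_1$ is a worthwhile clarification the paper leaves implicit.
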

\begin{proof}
Combine proposition \ref{prop}, corollary \ref{cor} and lemma \ref{surj11}. Notice that everything was performed geometrically.
\end{proof}

\pagebreak
\section{Non-orientable case}
\label{nonor}
\input{nonor}

\bibliography{bibliography}		% expects file "myrefs.bib"

\begin{thebibliography}{1}

\bibitem{Bredon}
Glen E.ÊBredon.
\newblock {\em Topology and Geometry}.
\newblock Springer, 1997.

\bibitem{Hatcher}
Allen Hatcher.
\newblock {\em Algebraic topology}.
\newblock Cambridge University Press, 2002.

\end{thebibliography}
\end{document}